\newcommand{\N}{\ensuremath{\mathbb{N}}}
\newcommand{\E}{\ensuremath{\mathbb{E}}}
\renewcommand{\P}{\ensuremath{\mathbb{P}}}
\renewcommand{\epsilon}{\varepsilon}
\renewcommand{\liminf}{\underline{\lim}\quad}
\newcommand{\miniop}[3]{%
\renewcommand{\arraystretch}{0.6}
\begin{array}{c}
{\scriptstyle #1}\\
#2\\
{\scriptstyle #3}
\end{array}
\renewcommand{\arraystretch}{1}}
\newcommand{\1}{1\hspace{-2.7mm}1}
\newcommand{\Var}{\text{Var }}
\newtheorem{theo}{Théorème}
\newtheorem{lemme}{Lemme}
\newtheorem{coro}{Corollaire}
\author{Olivier \textsc{Garet}}
\title{Le processus de Galton-Watson critique s'éteint}
\begin{document}

\maketitle
\begin{abstract}
  Les chaînes de Galton-Watson, ou processus de branchements, font partie de la formation traditionnelle des étudiants en probabilité. Le théorème de base concerne l'étude de la survie du modèle en fonction de la fécondité: sauf cas dégénéré, la survie n'est possible que si le taux de fécondité dépasse $1$.  La preuve classiquement enseignée est essentiellement analytique, reposant sur les fonctions génératrices et des arguments de convexité. On se propose ici, s'inspirant des travaux de Bezuidenhout et Grimmett, de donner une preuve plus conforme à l'intuition probabiliste. 
\end{abstract}

\section{Introduction}
S'inspirant d'un article de Grimmett et Marstrand sur certaines propriétés de la percolation en dimension $d\ge 3$, Bezuidenhout et Grimmett ont démontré dans un article célèbre que le processus de contact s'éteint au point critique.
Leur technique de preuve a souvent été utilisée pour étudier la croissance des modèles de population.

 Le présent texte se veut une introduction à leurs idées, dans le cadre de l'exemple simple du processus de Galton-Watson. On montrera ici qu'un processus de Galton-Waltson non dégénéré ne peut s'éteindre que si sa fertilité dépasse strictement~1. La preuve classiquement enseignée -- voir par exemple Benaïm--El Karoui~\cite{BEK} -- est essentiellement analytique. Elle repose sur les fonctions génératrices et des arguments de convexité.
La preuve présentée ici ne fait évidemment pas usage des fonctions génératrices et permet ainsi de retrouver une preuve plus conforme à l'intuition probabiliste.

\section{Chaînes de Galton-Walson: définition et propriétés élémentaires}

Soit $\nu,\mu$ deux lois sur $\N$.
$\nu$ est appelée loi de reproduction et $\mu$ 
est la loi de la taille de la population initiale. 

On appelle  
chaîne de Galton-Waltson de loi initiale $\mu$ et de loi de reproduction $\nu$
la chaîne de Markov de loi initiale $\mu$ et de matrice de transition
\begin{equation*}
p_{i,j}=
\begin{cases}
\nu^{*i}(j)\text{ si }i\ne 0\\
\delta_0(j)\text{ si }i=0
\end{cases}
\end{equation*}

On peut fabriquer une telle chaîne comme suit:
Soient $(X_i^n)_{i,j\ge 1}$ des variables aléatoires de loi $\nu$ et $Y_0$
une variable aléatoire de loi $\mu$ indépendante des $(X_i^n)_{i,j\ge 1}$.
On définit par récurrence la suite $(Y_n)_{n\ge 1}$ par 
$$\forall n\ge 0\quad Y_{n+1}=\sum_{1\le i\le Y_n}X_i^n.$$
Alors $(Y_n)_{n\ge 0}$ est une chaîne de Galton-Watson de loi initiale $\mu$ et de loi de reproduction $\nu$.

Si je note $\mathcal{F}_n=\sigma(X_i^k,i\ge 1,k\le n)$ et $m=\int_{\N} x\ d\nu(x)$, on a classiquement
\begin{equation}
\label{puissance}
\E[Y_{n+1}]=m\E[Y_n]\text{ et }\E[Y_n]=m^n\E[Y_0]
\end{equation}
On pose $\tau=\inf\{n\ge 0; Y_n=0\}$.

\begin{theo}
\label{mort}
Si $m<1$, $\P(\tau>n)=O(m^n)$. En particulier $\P(\tau<+\infty)=1$.
\end{theo}
\begin{proof}
Avec~\eqref{puissance}, on a $\P(\tau>n)\le\P(Y_n\ge 1)\le \E[Y_n]=m^n\E[Y_0]$.
\end{proof}

\begin{theo}
\label{galtonindep}
Si $(X_n)_{n\ge 0}$ et  $(Y_n)_{n\ge 0}$ sont deux chaînes de Galton-Waltson indépendantes de même  loi de reproduction $\nu$, alors 
 $(X_n+Y_n)_{n\ge 0}$ est encore une chaîne de Galton-Waltson de loi de reproduction $\nu$.
\end{theo}
\begin{proof}
Comme $(X_n)_{n\ge 0}$ et  $(Y_n)_{n\ge 0}$ sont des chaînes de Markov indépendantes, $((X_n,Y_n))_{n\ge 0}$ est une chaîne de Markov, de matrice de passage
$$p_{(x,a),(y,b)}=\nu^{* x}(a)\nu^{* y}(b).$$ Notons 
$\P^{(x,y)}$ les lois des chaînes de Markov canoniquement associées.
Il s'agit maintenant de montrer que si l'on pose $f(x,y)=x+y$, alors $(f(X_n,Y_n))_{n\ge 0}$ est une chaîne de Markov.
Pour cela, il suffit de montrer que si $x+y=r$, alors
$\P^{(x,y)}(f(X_1,Y_1)=p)$ ne dépend que $r$ et $p$.
Or, sous  $\P^{(x,y)}$, $X_1$ et $Y_1$ sont deux variables aléatoires indépendantes de lois respectives $\nu^{* x}$ et $\nu^{* y}$, donc la loi de
 $f(X_1,Y_1)$ est  $\nu^{* x}* \nu^{* y}=\nu^{* (x+y)}=\nu^{* r}$.
Ainsi, $\P^{(x,y)}(f(X_1,Y_1)=p)=\nu^{* r}(\{p\})$ et 
$(X_n+Y_n)_{n\ge 0}$ est bien une chaîne de Galton-Waltson de loi de reproduction $\nu$. Comme la loi initiale est $\P_{X_0+Y_0}=\P_{X_0}*\P_{Y_0}=\mu_1*\mu_2$, on a le résultat voulu.
\end{proof}

Dans la suite on notera $\P^i$ une probabilité sous laquelle une suite
$(Y_n)_{n\ge 0}$ est une chaîne de Galton-Watson de loi initiale $\delta_{i}$ et de loi de reproduction $\nu$.

\begin{coro}
Soient $n,p\ge 0$. On a
\begin{itemize}
\item Pour tout $n\ge 0$, $\P^n(\tau<+\infty)=\P^{1}(\tau<+\infty)^n$
\item Pour $n,p\ge 0$ $\P^n(\tau<+\infty|\mathcal{F}_p)=\P^{1}(\tau<+\infty)^{Y_p}$.
\item Pour $n,p\ge 1$, on a $\P^n(\tau=+\infty)>0 \iff \P^p(\tau=+\infty)>0.$
\end{itemize}
\end{coro}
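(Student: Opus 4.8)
The plan is to derive all three items from Theorem~\ref{galtonindep} together with the Markov property. For the first item, I would argue that a Galton--Watson chain started from $n$ individuals behaves like the sum of $n$ independent copies started from a single individual: indeed, by Theorem~\ref{galtonindep} applied inductively, if $(Y_n^{(1)}),\dots,(Y_n^{(n)})$ are independent chains each with initial law $\delta_1$, then $(Y_n^{(1)}+\dots+Y_n^{(n)})$ is a Galton--Watson chain with initial law $\delta_n$. The key observation is that the total population dies out at some finite time if and only if \emph{each} of the $n$ sub-populations dies out: since all terms are nonnegative integers, $\sum_k Y_n^{(k)} = 0$ is equivalent to $Y_n^{(k)}=0$ for all $k$. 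Hence $\{\tau<+\infty\}$ for the sum is the intersection of the $n$ independent events $\{\tau^{(k)}<+\infty\}$, and $\P^n(\tau<+\infty)=\prod_{k=1}^n \P^1(\tau<+\infty)=\P^1(\tau<+\infty)^n$.

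For the second item, I would use the Markov property at time $p$: conditionally on $\mathcal{F}_p$, the future evolution $(Y_{p+j})_{j\ge 0}$ is a Galton--Watson chain started from $Y_p$ individuals. Since $\{\tau<+\infty\}$ coincides (on the event that extinction has not already occurred by time $p$, and trivially otherwise since $Y_p=0$ forces the conclusion) with the event that this shifted chain eventually hits $0$, the first item gives $\P^n(\tau<+\infty\mid\mathcal{F}_p)=\P^{Y_p}(\tau<+\infty)=\P^1(\tau<+\infty)^{Y_p}$, using the convention that this equals $1$ when $Y_p=0$. One has to be slightly careful to phrase this as: extinction of the whole trajectory is equivalent to extinction of the post-$p$ trajectory, which is clear because $Y_p=0$ implies $\tau\le p<+\infty$.

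For the third item, write $q=\P^1(\tau<+\infty)\in[0,1]$. By the first item, $\P^n(\tau=+\infty)>0 \iff q^n<1 \iff q<1$, and likewise $\P^p(\tau=+\infty)>0 \iff q<1$, so both are equivalent to the single condition $q<1$ and hence to each other (this is where $n,p\ge 1$ is needed, since for $n=0$ one trivially has extinction).

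I do not anticipate a serious obstacle here; the only point requiring a little care is the measure-theoretic bookkeeping in item~2 — making precise the identity between the event $\{\tau<+\infty\}$ computed from the whole chain and the analogous event for the shifted chain, and invoking the Markov property in the form ``conditionally on $\mathcal{F}_p$, $(Y_{p+j})_j$ is Galton--Watson with initial distribution $\delta_{Y_p}$'' — but this is standard. The genuine content is entirely in item~1 and rests on Theorem~\ref{galtonindep} plus the elementary fact that a sum of nonnegative integers vanishes iff each summand does.
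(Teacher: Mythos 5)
Your proposal is correct and follows essentially the same route as the paper: Théorème~\ref{galtonindep} for the first point (the paper phrases the induction as $\P^{n+1}(\tau<+\infty)=\P^{n}(\tau<+\infty)\P^{1}(\tau<+\infty)$ rather than decomposing into $n$ copies at once, but both rest on the fact that a sum of nonnegative terms vanishes iff each does, together with the absorption at $0$), then the Markov property for the second point and the observation that the third is immediate.
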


\begin{proof}
Grâce au théorème~\ref{galtonindep} on a $$\P^{n+1}(\tau<+\infty)=\P^{n}(\tau<+\infty)\P^{1}(\tau<+\infty),$$ d'où par récurrence $\P^{n}(\tau<+\infty)=\P^{1}(\tau<+\infty)^n$. Cela donne le premier point. Avec la propriété de Markov, on obtient alors le deuxième point. Le dernier point est évident.
\end{proof}

\begin{coro}
\label{souschaine}
Soit $T\ge 1$. $(Y_{Tn})_{n\ge 0}$ est une chaîne de Galton-Watson de loi de reproduction $\P^1_{Y_T}$.
\end{coro}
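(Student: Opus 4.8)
Le plan est d'utiliser la propriété de Markov pour ramener l'énoncé à un calcul de matrice de transition, puis de mener ce calcul à l'aide du théorème~\ref{galtonindep}. En effet, si $(Y_n)_{n\ge 0}$ est une chaîne de Markov de matrice de transition $P$, la suite extraite $(Y_{Tn})_{n\ge 0}$ est encore une chaîne de Markov, de matrice de transition $P^T$. Il suffit donc de vérifier que $P^T$ est la matrice de transition d'une chaîne de Galton-Watson de loi de reproduction $\nu':=\P^1_{Y_T}$, autrement dit que
$$(P^T)_{i,j}=\nu'^{*i}(\{j\})\text{ pour }i\ge 1\quad\text{et}\quad (P^T)_{0,j}=\delta_0(\{j\}).$$

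Je commencerais par le cas $i=0$, qui est immédiat puisque $0$ est absorbant: sous $\P^0$ on a $Y_T=0$ presque sûrement, donc $(P^T)_{0,\cdot}=\delta_0$. Pour $i\ge 1$, on a $(P^T)_{i,j}=\P^i(Y_T=j)$, et tout revient à montrer par récurrence sur $i$ que la loi de $Y_T$ sous $\P^i$ est $\nu'^{*i}$. Le cas $i=1$ est la définition même de $\nu'$. Pour l'hérédité, on écrit $\delta_{i+1}=\delta_i*\delta_1$ et on applique le théorème~\ref{galtonindep}: sous $\P^{i+1}$, le processus $(Y_n)_{n\ge 0}$ a même loi que $(X_n+Z_n)_{n\ge 0}$, où $(X_n)_{n\ge 0}$ et $(Z_n)_{n\ge 0}$ sont deux chaînes de Galton-Watson indépendantes de loi de reproduction $\nu$, de lois initiales $\delta_i$ et $\delta_1$. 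Donc $Y_T$ a, sous $\P^{i+1}$, la loi de $X_T+Z_T$ avec $X_T$ et $Z_T$ indépendantes de lois $\nu'^{*i}$ (hypothèse de récurrence) et $\nu'$, d'où $\P^{i+1}_{Y_T}=\nu'^{*i}*\nu'=\nu'^{*(i+1)}$.

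Il ne resterait plus qu'à recoller les morceaux: par la propriété de Markov, la loi de $Y_{T(n+1)}$ conditionnellement à $(Y_{T0},\dots,Y_{Tn})$ ne dépend que de $Y_{Tn}$ et vaut $\nu'^{*Y_{Tn}}$ si $Y_{Tn}\ge 1$, $\delta_0$ si $Y_{Tn}=0$; c'est exactement la dynamique d'une chaîne de Galton-Watson de loi de reproduction $\nu'$. Comme enfin $Y_{T0}=Y_0$ a pour loi $\mu$, on conclut que $(Y_{Tn})_{n\ge 0}$ est une chaîne de Galton-Watson de loi initiale $\mu$ et de loi de reproduction $\P^1_{Y_T}$.

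Le point délicat me semble être l'emploi du théorème~\ref{galtonindep} dans l'étape de récurrence: il faut se rappeler qu'il donne une égalité en loi des processus tout entiers, ce qui autorise à en extraire l'instant $T$ et à obtenir la relation de convolution $\P^{i+1}_{Y_T}=\P^i_{Y_T}*\P^1_{Y_T}$ --- relation en tout point analogue à celle déjà utilisée pour établir le corollaire précédent. Le reste n'est que vérification de routine sur les matrices de transition.
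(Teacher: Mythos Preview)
Ta preuve est correcte et suit exactement la même démarche que celle du papier : on utilise d'abord le fait général qu'une sous-suite $(Y_{Tn})$ d'une chaîne de Markov est encore markovienne, puis on identifie la matrice de transition en itérant le théorème~\ref{galtonindep} pour obtenir $\P^k(Y_T=\cdot)=(\P^1_{Y_T})^{*k}$. Tu es simplement un peu plus explicite sur le cas $i=0$ et sur la récurrence, là où le papier se contente de dire qu'on applique le théorème $k-1$ fois.
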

\begin{proof}
Comme $(Y_n)$ est une chaîne de Markov, il est bien  connu que $(Y_{Tn})_{n\ge 0}$ est une chaîne de Markov. Il reste à évaluer les probabilités de transition.

Soit $k\ge 1$.
En appliquant $k-1$ fois le théorème~\ref{galtonindep},on  voit que
si  $(Y^1_t)_{t\ge 0}$,$(Y^2_t)_{t\ge 0},$\dots$ (Y^k_t)_{t\ge 0}$ sont des processus de Galton-Watson indépendants de lois initiales respectives $\delta_1$ et de loi de reproduction~$\nu$,
alors $(Y^1_t+\dots Y^k_t)_{t\ge 0}$ est un processus de  Galton-Watson de loi initiale $\delta_{k}$ et de loi de reproduction~$\nu$.
En conséquence
$$\P^k(Y_T=p)=\P(Y^1_T+\dots Y^k_T=p)=\P_{Y^1_T}^{*k}(p),$$
ce qui est le résultat voulu.
\end{proof}

\section{Une preuve probabiliste}

\subsection{Survie dans la zone surcritique}

\begin{theo}
\label{surcritique}
Si $m>1$, alors $\P^{1}(\tau=+\infty)>0$.
\end{theo}
\begin{proof}
Soit $a$ avec $1<a<m$. On a
$$\miniop{}{\lim}{M\to +\infty} \int x\wedge M \ d\nu= \int  x \ d\nu=m,$$ donc il existe $M$ tel que  $\int x\wedge M \ d\nu>a$.  Pour $k\ge n$, on a

\begin{align*}
\P^k(Y_1<na)&=\P(X_1+\dots X_k<na)\\\le &\P(X_1\wedge M+\dots X_n\wedge M<na)\\& =\P(n\E[X_1\wedge M]-(X_1\wedge M+\dots X_n\wedge M))> (\E[X_1\wedge M]-a)n)\\& \le\frac{\Var X_1\wedge M}{(\E[X_1\wedge M]-a)n},
\end{align*}
avec l'inégalité de Tchebitchef. 
Posons $\phi(k,x)=\P^k(Y_1<x)$.
Soit $n>c=\frac{\Var (X_1\wedge M)}{\E[X_1\wedge M]-a}$
Avec la propriété de Markov, on a pour
 $A\in \mathcal{F}_t$ avec $A\subset\{Y_t\ge n\}$:
\begin{align*}
\P(A\cap \{Y_{t+1}<an\})&=
\E[\1_A  \1_{Y_{t+1}<an\}}]\\
&=\E[\1_A\E[\1_{Y_{t+1}<an\}}|\mathcal{F}_t]]\\
&=\E[\1_A \phi(Y_t,an)]\\
&\le \E [\1_A c/n]=c/n\P(A),
\end{align*}
d'où $\P(Y_{t+1}\ge an| A)\ge 1-\frac{c}n$.\\
On en déduit par récurrence que pour $A_t=\miniop{t}{\cap}{i=1}\{Y_{t}\ge na^t\}$, on a
$$\P^n(A_t)\ge\miniop{t-1}{\prod}{i=0}\left(1-\frac{c}{na^i}\right),$$
d'où
$\P^n(\tau=+\infty)\ge\P^n(\forall t\ge 0\quad Y_{t}\ge na^t)\ge\prod_{i=0}^{+\infty} \left(1-\frac{c}{na^i}\right)>0.$

\end{proof}

\subsection{La survie est une propriété locale}

\begin{theo}
\label{equivalence}
Soit $(Y_n)_{n\ge 0}$ une chaîne de Galton-Watson de loi de reproduction~$\nu$.
On suppose que $\nu(0)>0$. Alors on a équivalence entre
\begin{itemize}
\item $\exists N,T\ge 1\quad \P^N(Y_T\ge 2N)>\frac12$.
\item $\P^1(\tau=+\infty)>0$.
\end{itemize}
\end{theo}

Avant de faire la preuve, donnons un aperçu des idées principales:
\begin{itemize}
\item Dans le sens direct, l'idée est de comparer la chaîne avec  un processus de Galton-Watson surcritique, puis de conclure à l'aide du théorème~\ref{surcritique}.
\item Le sens réciproque est ici assez simple, puisqu'il s'agit essentiellement de montrer que le nombre de particules explose dès qu'il y a survie.
Toutefois, il faudra avoir à l'esprit que si l'évènement local est plus compliqué, cette partie-là sera en réalité la plus difficile.
\end{itemize}

\begin{lemme}
 Si il existe $a$ et $n$ positifs tels que $a\P^N(Y_1\ge aN)>1$, alors $\P^1(\tau=+\infty)>0$.
\end{lemme}
\begin{proof}
Soient $X_i^n$ i.i.d. de loi $\nu$.
On pose $M_0=1$, $Y_0=N$, puis 
$$\forall n\ge 0\quad Y_{n+1}=\sum_{1\le i\le Y_n}X_i^n\text{ et }M_{n+1}=\sum_{i=1}^{M_n} aB_i^n,$$
avec $B_i^n=\1_{\{X^n_{(i-1)N+1}+\dots X^n_{iN}\ge aN\}}$.
On montre par récurrence que $Y_n\ge NM_n$. En effet, si $Y_n\ge NM_n$, on a
$$Y_{n+1}=\sum_{1\le i\le Y_n}X_i^n\ge \sum_{1\le i\le NM_n}X_i^n=\sum_{i=1}^{M_n}(X^n_{(i-1)N+1}+\dots X^n_{iN})\ge \sum_{i=1}^{M_n} aNB_i^n=NM_{n+1}.$$
$(M_n)$ est une chaîne de Galton-Walson de fertilité $\E[aB_i^n]=a\P^N(Y_1\ge aN)>1$, donc qui peut survivre d'après le théorème~\ref{surcritique}. Par comparaison, $(Y_n)$ peut survivre.
\end{proof}
On remarquera que la preuve du lemme repose sur un argument de couplage: on fait vivre sur un même espace $(Y_n)_{n\ge 0}$ et un processus de Galton-Watson de loi de reproduction $(1-q)\delta_0+q\delta_{a}$, avec $q=\P^N(Y_1\ge aN)$.

\begin{proof}[Preuve du théorème]
D'après le corollaire~\ref{souschaine}, $(Y_{nT})_{n\ge 0}$ est une chaîne de Galton-Watson. On peut donc lui appliquer le lemme avec $a=2$: $(Y_{nT})_{n\ge 0}$ peut survivre, donc  $(Y_{n})_{n\ge 0}$ peut survivre.

Réciproquement, supposons que $\P^1(\tau=+\infty)>0$, soit $\P^1(\tau<+\infty)<1$.\\
Comme $\P^N(\tau<+\infty)=\P^1(\tau<+\infty)^N$, donc on peut choisir
$N$ tel que $\P^N(\tau<+\infty)<1/2$.

On a vu que $\P^N(\tau<+\infty|\mathcal{F}_t)=\P^1(\tau<+\infty)^{Y_t}$.\\
Comme $\P^1(\tau<+\infty)\ge \P^1(Y_1=0)=\nu(0)>0$, on peut écrire
$$Y_t=\frac{\log \P^N(\tau<+\infty|\mathcal{F}_t)}{\log \P^1(\tau<+\infty)}.$$
Or le théorème de convergence des martingales dit que
$$\E^N[\1_{\{\tau<+\infty\}}|\mathcal{F}_t]=\P^N(\tau<+\infty|\mathcal{F}_t)\to \1_{\{\tau<+\infty\}}\quad\P^N\text{ p.s.}$$ lorsque $t$ tend vers l'infini. \\
En particulier, sur l'événement $\{\tau<+\infty\}$, 
$\P^N(\tau<+\infty|\mathcal{F}_t)$ tend presque sûrement vers $0$ et
$Y_t$ tend presque sûrement vers l'infini. On a donc presque sûrement l'inégalité
$$\1_{\{\tau=+\infty\}}\le \miniop{}{\liminf}{t\to +\infty}\1_{\{Y_t\ge 2N\}}.$$
Avec le lemme de Fatou, il vient
$$\P^N(\tau=+\infty)=\E^N(\1_{\{\tau=+\infty\}})\le \miniop{}{\liminf}{t\to +\infty}\E^N[\1_{\{Y_t\ge 2N\}}]= \miniop{}{\liminf}{t\to +\infty}\P^N(Y_t\ge 2N)$$
Comme $\P^N(\tau=+\infty)>1/2$, il existe $T$ tel que $\P^N(Y_T\ge 2N)>1/2$.
\end{proof}

\subsection{\'Etude du cas critique}

\begin{theo}
Si $\nu(0)>0$ et $m=1$, alors $\P^1(\tau=+\infty)=0$.
\end{theo}

\begin{proof}[Preuve 1]
Il suffit de noter que quels que soient $N,T\ge 1$, on a $$\P^N(Y_T\ge 2N)\le\frac{\E^N(Y_T)}{2N}=\frac{N}{2N}=\frac12$$
et d'appliquer la contraposée du théorème~\ref{equivalence}.
\end{proof}
On va voir une deuxième preuve, un peu plus longue, mais aussi plus robuste.
Cette méthode a notamment été utilisée dans Garet-Marchand~\cite{GM-BRW} pour l'étude des marches aléatoires branchantes en milieu aléatoire.
\begin{proof}[Preuve 2]
Supposons par l'absurde que l'on a non seulement $\nu(0)>0$ et $m=1$ mais aussi $\P^1(\tau=+\infty)>0$.

D'après le théorème~\ref{equivalence} (sens réciproque), on peut choisir $n$ et $T$ tels que  $\P^N(Y_T\ge 2N)>\frac12$.

L'idée est alors de coupler la chaîne avec une chaîne sous-critique.
Soient $(X_i^n)_{i,j\ge 1}$ des variables aléatoires de loi $\nu$, $(B_i^n)_{i,j\ge 1}$ une suite de variables de Bernoulli de paramètre $p$ indépendantes  des $(X_i^t)$. On pose $Y_0=N$, $Y^p_0=N$, puis 

$$\forall n\ge 0\quad Y_{n+1}=\sum_{1\le i\le Y_n}X_i^n\text{ et }Y^p_{n+1}=\sum_{1\le i\le Y^p_n}B_i^n X_i^n.$$

Par continuité séquencielle croissante,
$$\miniop{}{\lim}{M\to +\infty}\P^N( \max(Y_i,0\le i\le T)\le M,Y_T\ge 2N)=\P^N( Y_T\ge 2N)>1/2,$$
donc il existe $M$ tel que $\P( \max(Y_i,0\le i\le T)\le M,Y_T\ge 2N)>1/2$.
On a alors
\begin{align*}
\P(Y^p_T\ge 2N)&\ge \P(Y_T\ge 2N,\forall i\le T\quad Y^p_i=Y_i)\\
& \ge \P\left( \begin{array}{l}\max(Y_i,0\le i\le T)\le M,Y_T\ge 2N,\\\forall (t,i)\in\{0,\dots, T-1\}\times\{1,\dots,M\} \quad B_i^t=1\end{array}\right)\\&=\P( \max(Y_i,0\le i\le T)\le M,Y_T\ge 2N)p^{TM}
\end{align*}
Si on prend $p<1$ suffisamment grand, on  a $$\P( \max(Y_i,0\le i\le T)\le M,Y_T\ge 2N)p^{TM}>1/2,$$ donc $\P(Y^p_T\ge 2N)>1/2$.
Or $(Y^p_t)$ est un processus de Galton-Watson de loi de reproduction $B_1^1X_1^1$ et de loi initiale $\delta_N$, donc d'après le théorème~\ref{equivalence} (sens direct), ce processus de Galton-Watson peut survivre.
Mais $$\E[B_1^1X_1^1]=\E[B_1^1]\E[X_1^1]=pm=p<1,$$ donc d'après le théorème~\ref{mort}, le processus ne peut survivre. Contradiction.

\end{proof}

\def\refname{Références}
\bibliographystyle{plain}

\end{document}